\date{\today}
\newtheorem{thm}{Theorem}[section]
\newtheorem{lemma}[thm]{Lemma}
\newtheorem{proposition}[thm]{Proposition}
\theoremstyle{remark}
\newtheorem{remark}[thm]{Remark}
\newtheorem{defi}[thm]{Definition}
\newcommand{\R}{\mathbb{R}}
\newcommand{\C}{\mathbb{C}}
\newcommand{\Z}{\mathbb{Z}}
\newcommand{\cF}{\mathcal{F}}
\newcommand{\cD}{\operatorname{\mathcal{D}}}
\renewcommand\phi{\varphi}
\newcommand{\eps}{\varepsilon}
\renewcommand{\leq}{\leqslant}
\newcommand{\Del}{\operatorname{\Delta}}
\newcommand{\ep}{\varepsilon}
\renewcommand{\leq}{\leqslant}
\newcommand\cH{{\mathcal{H}}}
\newcommand\cB{{\mathcal{B}}}
\newcommand\cR{{\mathcal{R}}}
\newcommand\I{{\rm{i}}}
\newcommand{\be}{\begin{equation}}
\newcommand{\ee}{\end{equation}}
\newcommand{\Hm}[1]{\leavevmode{\marginpar{\tiny%
$\hbox to 0mm{\hspace*{-0.5mm}$\leftarrow$\hss}%
\vcenter{\vrule depth 0.1mm height 0.1mm width \the\marginparwidth}%
\hbox to
0mm{\hss$\rightarrow$\hspace*{-0.5mm}}$\\\relax\raggedright #1}}}
\title[Bifurcating standing waves in gapped honeycomb structures]{Bifurcating standing waves for effective equations in gapped honeycomb structures}
\author[W. Borrelli]{William Borrelli}
\address[W. Borrelli]{Centro De Giorgi, Scuola Normale Superiore, Piazza dei Cavalieri 3, I-56100 , Pisa, Italy.}
\email{william.borrelli@sns.it}
\author[R. Carlone]{Raffaele Carlone}
\address[R. Carlone]{Universit\`{a} ``Federico II'' di Napoli, Dipartimento di Matematica e Applicazioni ``R. Caccioppoli'', MSA, via Cinthia, I-80126, Napoli, Italy.}
\email{raffaele.carlone@unina.it}
\date{\today}
\begin{document}

\begin{abstract}
In this paper we deal with two-dimensional cubic Dirac equations appearing as effective model in gapped honeycomb structures. We give a formal derivation starting from cubic Schr\"odinger equations and prove the existence of standing waves bifurcating from one band-edge of the linear spectrum.
\end{abstract}

\maketitle
{\footnotesize
\emph{Keywords}: nonlinear Dirac equations, bifurcation methods, existence results, honeycomb structures.

\medskip

\emph{2020 MSC}: 35Q40, 35B33, 35A15 .
}
%\tableofcontents

%%%%%%%%%%%%%%%%%%%%%%%%%%%%%%%%%%%%%%%%%%%%%%%%%%%%%%%%%%%%%%%%%%%%%%%%%%%%%%%%%%%%%%%
%%%%%%%%%%%%%%%%%%%%%%%%%%%%%%%%%%%%%%%%%%%%%%%%%%%%%%%%%%%%%%%%%%%%%%%%%%%%%%%%%%%%%%%
%%%%%%%%%%%%%%%%%%%%%%%%%%%%%%%%%%%%%%%%%%%%%%%%%%%%%%%%%%%%%%%%%%%%%%%%%%%%%%%%%%%%%%%

\section{Introduction}
\label{sec-intro}

\subsection{Motivation and main results}

In this paper we deal with nonlinear massive Dirac equations of the form
\be\label{eq:nld}
(\cD+m\sigma_3-\omega)\psi=h(\psi)\psi  \qquad\mbox{on}\quad\R^{2}\,,
\ee
where $\omega\in(-m,m)$ is a frequency in the spectral gap of the Dirac operator $\cD+m\sigma_3$, with $m>0$ (see Section \ref{sec:preliminaries}).  

\noindent We consider the nonlinearity in \eqref{eq:nld} of the form
\be\label{eq:nonlin}
h(z) = \begin{pmatrix}
\beta_1 |z_1|^2 + 2\beta_2|z_2|^2 & 0 \\
0 & \beta_1|z_2|^2 + 2\beta_2 |z_1|^2
\end{pmatrix}\,,\qquad z\in\R^2\,,
\ee
with given parameters $\beta_1,\beta_2>0$.
\medskip

Equation \eqref{eq:nld} appears as an effective model of wave propagation in two-dimensional honeycomb structures. As proved in \cite{FWhoneycomb}, if $V\in C^\infty(\R^2,\R)$ is a potential having the symmetries of a honeycomb lattice, then the Schr\"odinger operator
\be\label{eq:hamiltonian}
H=-\Del+V(x)\,,\qquad x\in\R^2\,,
\ee
exhibits generically conical touching points in its dispersion bands called \emph{Dirac points}. The dynamics of wave packets spectrally concentrated around Dirac points, see \cite{FWwaves}, is thus effectively described by the \emph{massless} (i.e., $m=0$) Dirac operator. Adding a perturbation that breaks parity induces a mass term in the effective operator, as proved in \cite[Appendix]{FWhoneycomb}. 

\vspace{1cm}

An important model in nonlinear optics and in the description of macroscopic phenomena is given by  the \emph{nonlinear Schr\"odinger / Gross-Pitaevski equation} \cite{derivation,nonlinearoptics,boseeinstein}
\begin{equation}\label{eq:nls}
\I\partial_t u=H u+\vert u\vert^2u .
\end{equation}

This equation, in the approximation described before, leads (at least formally) to the effective cubic nonlinearity \eqref{eq:nonlin}. Indeed, as first computed in \cite{wavedirac}, the effective equation around Dirac points reads
\begin{equation}\label{eq:effective}
\left\{\begin{aligned}
    \partial_{t}\Xi_{1}+\overline{\lambda}(\partial_{x_{1}}+i\partial_{x_{2}})\Xi_{2} &=i(2\beta_{2}\vert\Xi_{1}\vert^{2}+\beta_{1}\vert\Xi_{2}\vert^{2})\Xi_{1} \\
     \partial_{t}\Xi_{2}+\lambda(\partial_{x_{1}}-i\partial_{x_{2}})\Xi_{1} &=i(\beta_{1}\vert\Xi_{1}\vert^{2}+2\beta_{2}\vert\Xi_{2}\vert^{2})\Xi_{2} 
\end{aligned}\right.\,,
\end{equation}
where the parameters $\lambda\in\C\setminus\{0\}$, $\beta_1,\beta_2>0$ depend on the potential $V$ in \eqref{eq:hamiltonian}. 

Setting $\Psi_1:=-\frac{\lambda}{\vert\lambda\vert}\Xi_2\,, \Psi_2:=\Xi_1$ and looking for stationary solutions
\[
\Psi(t,x)=\psi(x)\,,
\] we get the massless version of \eqref{eq:nld}, i.e. with $m=\omega=0$. As shown in Section \ref{sec:derivation}, adding a perturbation breaking the parity of the potential $V$ in \eqref{eq:hamiltonian} gives an additional mass term in the effective equation. This corresponds to a gap $(-m,m)$ in the linear spectrum so that we can consider stationary solutions at frequency $\omega\in(-m,m)$, leading to \eqref{eq:nld}.

In \cite{arbunichsparber} the validity of the effective cubic equation is studied. In Section \ref{sec:derivation} we give a formal derivation of the effective model \eqref{eq:nld} using a multiscale expansion.

Existence and qualitative properties of solutions to the massless version of \eqref{eq:nld} have been studied in \cite{massless,borrellifrank}. The massive case in \eqref{eq:nld} has been addressed in \cite{shooting,mult} for the special choice of parameters in \eqref{eq:nld}. 

In this paper we partly generalize those results dealing with arbitrary $\beta_1,\beta_2>0$ and proving the existence of stationary solutions bifurcating from one edge of the spectral gap of the operator $\cD+m\sigma_3$.
\smallskip

We remark that cubic Dirac equations in two dimensions are \emph{critical} for the Sobolev embedding. Such types of equations have been studied also in different contexts. We mention, for instance, problems from conformal spin geometry, for which we refer the reader to \cite{spinorialanalogue,nadineconformalinvariant,isobecritical,maalaoui} and references therein, and in the case of coupled systems involving the Dirac operator and critical nonlinearities related to supersymmetric models coupling gravity with fermions, see \cite{Borrelli-Maalaoui-JGA2020,Maalaoui-Martino-JDE19}. The main difficulty in studying those equations comes from the underlying conformal symmetry so that looking for stationary solutions by variational methods one has to deal with the induced loss of compactness, see \cite{shooting,mult}. This problem can be circumvented, for instance, using a bifurcation argument to find solutions to \eqref{eq:nld}, as done in this paper following \cite{Ounaies}. We mention that the same method has been recently used for nonlinear Dirac equations on star graphs \cite{Diracnoncompactgraphs}. 
\medskip

The results given in \cite{shooting,mult} correspond to the choice of parameters $\beta_1=2\beta_2$, so that one can assume $\beta_1=1, \beta_2=1/2$ by scaling.
In this paper we deal with general $\beta_1,\beta_2>0$, but this forces us to put restrictions on the frequency $\omega$ that will be close to to the band-edge at $m$.
More precisely, we focus on the existence of standing waves to \eqref{eq:nld} of symmetric form
\begin{equation}\label{eq:ansatz}
\psi(r,\theta)
=
\begin{pmatrix}
v(r)  \\
\I u(r) e^{\I \theta}
\end{pmatrix} \,,\qquad (r,\theta)\in(0,\infty)\times\mathbb{S}^1\,
\end{equation}
 $(r,\theta)$ being polar coordinates in $\R^2$, and $u,v$ real-valued functions. Notice that \eqref{eq:ansatz} is the two-dimensional analogue of the \emph{Soler/Wakano ansatz} \cite{cv,cuevas,es}.

\begin{thm}\label{thm:main}
Let $\eps:=m-\omega$. There exists $\eps_0>0$ such that for $\eps\in(0,\eps_0)$ equation \eqref{eq:nld} admits a solution $\psi_\eps$ of the form \eqref{eq:ansatz}, with
\[
u_\eps(r)=\eps(-f'(\sqrt{\eps}r)+e_1(\sqrt{\eps}r))\,,\qquad v_\eps(r)=\sqrt{\eps}(f(\sqrt{\eps}r)+e_2(\sqrt{\eps}r)) \,,\qquad r>0\,,
\]
where $\Vert e_j\Vert_{H^1(\R^2)}\leq C\eps$, $j=1,2$, and $f\in H^1(\R^2)$ is the positive ground state of the NLS
\[
-\Delta f-f^3+f=0\,,\qquad \mbox{on $\R^2$.}
\]
\end{thm}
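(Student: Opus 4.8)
The plan is to implement a Lyapunov--Schmidt reduction around the band-edge $\omega=m$, using the rescaling suggested by the statement. First I would substitute the ansatz \eqref{eq:ansatz} into \eqref{eq:nld} to obtain the ODE system for $(u,v)$ on $(0,\infty)$; with the nonlinearity \eqref{eq:nonlin} this becomes a radial first-order system of the form $u' + u/r = (m+\omega)\,v + (\text{cubic in }u,v)$ and $v' = -(m-\omega)\,u + (\text{cubic})$, up to the precise coefficients coming from $\beta_1,\beta_2$. I then perform the scaling $r = x/\sqrt{\eps}$, $v(r) = \sqrt{\eps}\,V(\sqrt{\eps}r)$, $u(r) = \eps\,U(\sqrt{\eps}r)$ dictated by the asymptotics in the theorem; the factor $m+\omega = 2m-\eps$ is nondegenerate, so the second equation lets us solve $U = -\tfrac{1}{2m}\big(V' + O(\eps)\big)$ and substitute into the first. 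After this elimination the system collapses, at leading order in $\eps$, to the focusing cubic NLS $-\Delta f + f - f^3 = 0$ on $\R^2$ (the coefficients $\beta_1,\beta_2$ get absorbed by a rescaling of the amplitude, which is exactly where the restriction to $\omega$ near $m$ rather than near $-m$ enters, since only one band-edge produces a focusing sign).

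Next I would set up the functional-analytic framework: work in the space of radial $H^1(\R^2)$ functions, write the reduced equation as $\mathcal{F}(f + e, \eps) = 0$ where $f$ is the (known, nondegenerate up to symmetry) Coppersmith--Weinstein--Kwong ground state of the cubic NLS, and $e$ is the correction. The linearization at $(f,0)$ is $L_+ = -\Delta + 1 - 3f^2$ acting on radial functions; by the classical nondegeneracy result for the two-dimensional cubic NLS ground state, $L_+$ is invertible on the radial subspace (the kernel of $L_+$ on all of $H^1$ is spanned by $\partial_{x_i} f$, which are \emph{not} radial, so restricting to radial functions removes the kernel entirely). Then the implicit function theorem applies: for $\eps$ small there is a unique small solution $e = e(\eps)$ depending smoothly (or at least Lipschitz-continuously) on $\eps$, with $\|e(\eps)\|_{H^1} \leq C\eps$ because the $\eps$-dependent remainder terms in $\mathcal{F}$ are $O(\eps)$ in the relevant norm. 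Unwinding the scaling and splitting $e$ into its two components $(e_1, e_2)$ gives precisely the stated form of $u_\eps, v_\eps$.

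The main obstacle I anticipate is not the abstract bifurcation step but the careful bookkeeping in two places. First, getting the effective equation exactly right: one must track how the cubic terms in \eqref{eq:nonlin}, with genuinely distinct $\beta_1,\beta_2$, reorganize after eliminating $U$ — some cross terms between $U$ and $V$ are formally lower order (because $U = O(\sqrt{\eps}\,V)$) and must be shown to contribute only to the $O(\eps)$ remainder, while the surviving cubic term in $V$ must have the correct (focusing) sign and a coefficient that can be normalized to $1$; this is the step that pins down $\eps>0$ with $\eps = m-\omega$ rather than $m+\omega$. Second, one must control the remainder $\mathcal{F}(f,\eps) - \mathcal{F}(f,0)$ and the Nemytskii (superposition) operators associated with the cubic nonlinearity in the scaling-adapted norms, uniformly as $\eps\to 0$; here the $H^1(\R^2)$ Sobolev embedding into $L^p$ for all finite $p$ makes the cubic term well-behaved, but the radial decay and the precise powers of $\eps$ in front of each term need to be matched so that the $O(\eps)$ bound genuinely holds. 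Once these estimates are in place, invertibility of $L_+$ on radial functions and the implicit function theorem finish the argument, and the asymptotic expansion of $u_\eps,v_\eps$ follows by reading off the leading term and the $O(\eps)$ correction.
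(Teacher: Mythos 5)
Your strategy is sound and reaches the same destination as the paper --- rescale by $\sqrt{\eps}$, recognize the focusing cubic NLS ground state $f$ as the $\eps=0$ limit, and close with nondegeneracy of $f$ plus the implicit function theorem --- but the technical packaging is genuinely different. The paper never eliminates one component: it keeps the rescaled first-order system \eqref{eq:diracscaled} as a map $\cF(\eps,\cdot,\cdot)$ on the radial subspace $X_r\subset H^1\times H^1$ and shows that the full $2\times2$ linearization at $(0,u_0,v_0)$ is an isomorphism by (i) injectivity, which reduces to the trivial radial kernel of the linearized NLS operator at the ground state, and (ii) surjectivity via the decomposition $D_{(u,v)}\cF=J+K(u_0)$ with $J$ an isomorphism (Lax--Milgram) and $K(u_0)$ compact (decay of the soliton plus local Rellich), concluding with the Fredholm alternative. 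Your route instead solves the second equation for $U$ in terms of $(V,V')$ and substitutes into the first, reducing to a scalar second-order equation whose linearization is $L_+=-\Delta+1-3f^2$ on radial functions. What the paper's system formulation buys is precisely the avoidance of your hardest step: for $\eps>0$ the relation between $U$ and $V'$ is a pointwise \emph{cubic} in $U$, so the elimination is an implicit Nemytskii-type inversion that must be shown to be $C^1$ between Sobolev spaces, and the reduced scalar equation involves $V''$, pushing you from $H^1$ to $H^2$ in the radial class; the first-order system stays in $H^1$ with explicitly polynomial nonlinearities. Conversely, your scalar reduction makes the role of $L_+$ and of the standard nondegeneracy theory (kernel spanned by $\partial_{x_i}f$, hence trivial on radial functions) completely transparent --- and, incidentally, your operator carries the correct factor $3$ in $-3f^2$, which is dropped in the paper's displayed linearization \eqref{eq:linearization}, evidently a typo there. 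In both approaches the bifurcation step is the same application of the implicit function theorem at $\eps=0$, and the $O(\eps)$ bound on the corrections $e_1,e_2$ follows from the $C^1$ dependence of the branch on $\eps$, as you indicate.
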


\begin{remark}
Arguing as in Section \ref{sec:proof} one can deal with the regime $-m<\omega<0,\omega\to-m$. However in that case the limit equation \eqref{eq:nls} is replaced by the following \emph{defocusing} NLS
\be\label{eq:defocusing}
-\Delta U+U^3+U=0\,,\qquad \mbox{on $\R^2$,}
\ee
which has no non-trivial solution in $H^1(\R^2)$. This can be easily seen multiplying the equation by such a solution and integrating by parts.
\end{remark}
%%%%%%%%%%%%%%%%

%%%%%%%%%%%%%%%%%%%%%%%%%%%%%%%%%%%%%%%%%%%%%%%%%%%%%%%%%%%%%%%%%%%%%%%%%%%%%%%%%%%%%%%

%%%%%%%%%%%%%%%%%%%%%%%%%%%%%%%%%%%%%%%%%%%%%%%%%%%%%%%%%%%%%%%%%%%%%%%%%%%%%%%%%%%%%%%%%%%%%%%%%%%%%%%%%%%%%%%%%%%%%%%%%%%%%%%%%%%%%%%%%%%%%%%%%%%%
\section{The Dirac operator}\label{sec:preliminaries}
 The \emph{Dirac operator} is the constant coefficients first order differential operator defined in two dimensions as
\begin{equation}\label{eq:dirac}
\mathcal{D}_{m}=\mathcal{D}+m\sigma_{3}:=-\I\sigma\cdot\nabla+m\sigma_{3}
\end{equation}
The constant $m>0$ usually represents the mass of the particle described by the equation. We adopt the notation $\sigma\cdot\nabla:=\sigma_{1}\partial_{1}+\sigma_{2}\partial_{2}$ and the $\sigma_{k}$'s are the Pauli matrices
\begin{equation}\label{eq:pauli} \sigma_{1}:=\begin{pmatrix} 0 \quad& 1 \\ 1 \quad& 0 \end{pmatrix}\quad,\quad \sigma_{2}:=\begin{pmatrix} 0 \quad& -\I \\ \I \quad& 0 \end{pmatrix} \quad,\quad \sigma_{3}:=\begin{pmatrix} 1 \quad& 0 \\ 0 \quad& -1 \end{pmatrix}\,.\end{equation}

The operator $\mathcal{D}_{m}$ is a self-adjoint operator on $L^{2}(\mathbb{R}^{2},\mathbb{C}^{2})$, with domain $H^{1}(\mathbb{R}^{2},\mathbb{C}^{2})$ and form-domain $H^{1/2}(\mathbb{R}^{2},\mathbb{C}^{2})$. 

Passing to the Fourier domain $p=(p_{1},p_{2})$ the Dirac operator \eqref{eq:dirac} becomes the multiplication by the matrix 
$$ \widehat{\mathcal{D}}_{m}(p)=\begin{pmatrix}m \quad & p_{1}-\I p_{2}\\ p_{1}+\I p_{2}\quad &m \end{pmatrix}$$ %+\begin{pmatrix}m\quad &0 \\ 0 \quad &-m \end{pmatrix}$$
and then the spectrum is easily found to be
\begin{equation}\label{eq:spectrum}
Spec(\mathcal{D}_{m})=(-\infty, -m]\cup[m, +\infty)
\end{equation}
The above mentioned results can be found, e.g.,  in \cite{diracthaller}.
%%%%%%%%%%%%%%%%%%%%%%%%%%%%%%%%%%%%%%%%%%%%%%%%%%%%%%%%%%%%%%%%%%%%%%%%%%%%%%%%%%%%%%%%%%%%%%%%%%%%%%%%%%%%%%%%%%%%%%%%%%%%%%%%%%%%%%%%%%%%%%%%%%%%%%%%%%%%%%%%%%%%%%%%%%%%%%%%%%%%%%%%
\section{Formal derivation of the model}\label{sec:derivation}
In this section we give a formal derivation of equation \eqref{eq:nld} from the corresponding cubic Schr\"odinger equation with honeycomb potential, following the exposition given in \cite{tesi}. 
\smallskip

We consider a fixed triangular lattice $\Lambda:=\Z v_1\oplus \Z v_2$, where $v_1,v_2\in\R^2$ are two linearly independent vectors.

%%%%%%%%%%%%%%%%%%%%%%%%%%%%%%%%%%%%%%%%%%%%%%%%%%%%%%%%%%%%%%%%%%%%%%
\subsection{Honeycomb Schr\"odinger operators}
Consider the Schr\"odinger operator
\be\label{eq:schrodinger}
H:=-\Delta+V(x),\qquad x\in\R^{2}\,.
\ee
\begin{defi}\label{honeycomb}
The function $V\in C^{\infty}(\R^{2})$ is called \emph{honeycomb potential}, see \cite{FWhoneycomb}, if there exists $x_{0}\in\R^{2}$ such that $\tilde{V}(x)=V(x-x_{0})$ has the following properties:
\begin{enumerate}
\item $\tilde{V}$ is periodic with respect to some triangular lattice $\Lambda$, that is, $\tilde{V}(x+v)=\tilde{V}(x)$, $\forall x\in\R^{2},\forall v\in\Lambda$; 
\item $\tilde{V}$ is even: $\tilde{V}(-x)=\tilde{V}(x)$, $\forall x\in\R^{2}$;
\item $\tilde{V}$ is invariant by $\frac{2\pi}{3}$ counteclockwise rotation:  $$\cR[\tilde{V}](x):=\tilde{V}(R^{*}x)=\tilde{V}(x)\,\quad \forall x\in\R^{2},$$ where $R$ is the corresponding rotation matrix:
\be\label{rotationmatrix} R=\begin{pmatrix} -\frac{1}{2} & \frac{\sqrt{3}}{2}\\ -\frac{\sqrt{3}}{2}&-\frac{1}{2}   \end{pmatrix}.\ee
\end{enumerate}
\end{defi}

\begin{remark}{(Some examples of honeycomb potentials \cite{FWhoneycomb})}
\begin{enumerate}
\item{\textbf{Atomic potentials}:} Let $\mathbb{H}=(A+\Lambda)\cup(B+\Lambda)$ be a hexagonal lattice, given by the superposition of two triangular lattices. Consider a radial function $V_{0}\in C^{\infty}(\R^{2})$ raplidly decaying at infinity (for instance, with polynomial rate) representing the potential generated by a nucleous located on a vertex of the lattice. The potential $$V(x)=\sum_{y\in\mathbb{H}}V_{0}(x-y) $$
is then given by the superposition of atomic potentials, and it is a honeycomb potential (Def.\ref{honeycomb}).
\item{\textbf{Optical lattices:}} The envelop $\psi$ of the electric field of a monochromatic beam propagating in a dielectric medium can be described by a Schr\"odinger equation. More precisely, denoting by $z$ the direction of propagation of the beam and assuming that the refraction index varies only in the transversal directions $(x,y)$, the function $\psi$ solve the following equation
\be
i\partial_{z}\psi=\left(-\Delta+V(x,y)\right)\psi\,.
\ee
In this case the honeycomb potential is generated using optical interference techniques \cite{photoniclattice}. A typical example, is the potential of the form
\be
V(x,y)\simeq V_{0}\left(\cos(k_{1}\cdot(x,y))+\cos(k_{1}\cdot(x,y))+\cos((k_{1}+k_{2})\cdot x) \right),\quad V_{0}\in\R, k_{1},k_{2}\in\R^{2}.
\ee
\end{enumerate}
\end{remark}
For any fixed $k\in\R^{2}$ consider the following eigenvalue problem with \emph{pseudo-periodic boundary conditions }(see \cite{FWhoneycomb} and \cite[Sec. XIII.16]{reedsimonIV}) :
\be\label{pseudoperiodic}
\begin{cases}
H\Phi(x;k)=\mu(k)\Phi(x;k),\qquad x\in\R^{2}\\
\Phi(x+v;k)=e^{ik\cdot v}\Phi(x;k),\qquad v\in\Lambda.
\end{cases}
\ee
\begin{remark}
The eigenfunctions $\Phi(x;k)$ are of class $C^{\infty}$ by elliptic regularity theory.
\end{remark}
Recall that, given a lattice, its \emph{(first )Brillouin zone} $\cB$ is defined as the fundamental cell of the dual lattice. In the case of a honeycomb lattice, both its fundamental cell and its Brillouin zone are hexagonal \cite{FWhoneycomb}. An important property of $\cB$ is that waves propagating in a periodic medium can be described in terms of \emph{Bloch functions}.
\smallskip

Given $k\in\cB$, the resolvent of $H(k)$ is compact and then the spectrum of the operator is real and purely discrete, accumulating at $+\infty$:
\be\label{spectrumpseudoperiodic}
\mu_{1}(k)\leq\mu_{2}(k)\leq ...\leq\mu_{j}(k)\leq...\uparrow+\infty.
\ee
Fixing $n\in\mathbb{N}$, one says that $k\mapsto\mu_{n}(k)$ is the $n$-th \emph{dispersion band} of the operator $H$ and call $n$-th \emph{Bloch wave} the function $\Phi_{n}(x,k)$. The spectrum may also have some gaps, and it can be obtained as union of the images of the dispersion bands of the operator
\be\label{spectrumschrodinger}
\operatorname{Spec}(H)=\bigcup_{n\in\mathbb{N}}\mu_{n}(\cB)\,.
\ee

Moreover, the Bloch waves constitute a complete systems, meaning that for all  $f\in L^{2}(\R^{2})$
\be\label{complete}
f(x)-\sum_{1\leq n\leq N}\int_{\cB}\langle\Phi_{n}(\cdot,k),f(\cdot)\rangle_{L^{2}(\R^{2})}\Phi_{n}(x;k)dk \longrightarrow 0
\ee
in $L^{2}(\R^{2})$, for $N\longrightarrow+\infty$ \cite{FWhoneycomb,reedsimonIV}. 

The Cauchy problem
\be\label{cauchyschrodinger}
\begin{cases}
i\partial_{t}u(t,x)=Hu(t,x),\qquad (t,x)\in\R\times\R^{2},\\
u(0,x)=u_{0}(x)\in L^{2}(\R^{2}),
\end{cases}
\ee
admits the solution
\be\label{propagator}
e^{-iH_{V}t}u_{0}=\sum_{n\in\mathbb{N}}\int_{\cB}e^{-i\mu_{n}(k)}\langle\Phi_{n}(\cdot,k),u_{0}(\cdot)\rangle_{L^{2}(\R^{2})}\Phi_{n}(x,k)dk.
\ee
As a consequence, it is evident that the dynamics (\ref{propagator}) are strongly influenced by the behavior of the band functions $\mu_{n}(\cdot)$, $n\in\mathbb{N}$. In particular, as showed in \cite{FWwaves}, there exist two bands $\mu_N,\mu_{N+1}$ that meet at conical points located at the vertices of $\cB$. That is, locally near such a point $K_*\in\cB$ there holds

\be\label{conicaltouching}
\begin{cases}
\mu_{N+1}(k)-\mu_{N+1}(K_*)=\vert\lambda\vert\vert k-K\vert\left(1+E_{+}(k-K) \right),\\
\mu_{n}(k)-\mu_N(K_*)=-\vert\lambda\vert\vert k-K\vert\left( 1+E_{-}(k-K)\right)
\end{cases}\,,\qquad \vert k-K\vert<\delta\,,\lambda\in\C,\lambda\neq0\,.
\ee
Here $E_{\pm}: U_{\delta}\rightarrow \R$, with $U_{\delta}:=\left\{y\in\R^{2} :\vert y\vert<\delta \right\}$, are Lipschitz functions  such that  $E_{\pm}(y)=O(\vert y\vert)$, for $\vert y\vert\rightarrow0$. This means that, to first order, the dispersion relation near $k=K_*$ is a cone. This corresponds to the dispersion relation of the two-dimensional Dirac operator \eqref{eq:dirac}, as it can be readily seen in the Fourier domain.
\medskip

Consider a wave packet $u_{0}(x)=u^{\varepsilon}_{0}(x)$ concentrated around a Dirac point $K_*$
\be\label{concentrated}
u^{\varepsilon}_{0}(x)=\sqrt{\varepsilon}(\psi_{0,1}(\varepsilon x)\Phi_{1}(x)+\psi_{0,2}(\varepsilon x)\Phi_{2}(x))
\ee
where $\Phi_{j}$, $j=1,2$, are the Bloch functions at $K_*$ and the functions $\psi_{0,j}$ are some (complex) amplitudes to be determined. Then the solution of the NLS \eqref{eq:nls}, with initial conditions $u_0^\epsilon$ is expected to evolve to leading order in $\varepsilon$ still as a modulation of Bloch functions,
\be\label{approximate}
u^{\varepsilon}(t,x)\underset{\epsilon\rightarrow0^{+}}{\sim}\sqrt{\varepsilon}\left(\psi_{1}(\varepsilon t,\varepsilon x)\Phi_{1}(x)+\psi_{2}(\varepsilon t,\varepsilon x)\Phi_{2}(x) + \mathcal{O}(\varepsilon)\right)\,,\quad t>0,x\in\R^2\,,
\ee
and the amplitudes $\psi_j$ solve the effective equation \eqref{eq:effective}.
\medskip

Given a Dirac point $K_*\in\cB$, let $\mu_*:=\mu_N(K_*)=\mu_{N+1}(K_*)$ be the frequency at which the conical crossing occurs. Consider then the NLS
\be\label{eq:gp}
(-\Delta+V-\mu_*)u=\vert u\vert^2u\,,\qquad \R^2\,.
\ee

As in \eqref{approximate}, one thus looks for solutions to \eqref{eq:gp} of the form
\be\label{eq:approximate}
u^{\varepsilon}(t,x)\underset{\epsilon\rightarrow0^{+}}{\sim}\sqrt{\varepsilon}e^{-t\mu_*}\left(\psi_{1}(\varepsilon x)\Phi_{1}(x)+\psi_{2}(\varepsilon x)\Phi_{2}(x) + \mathcal{O}(\varepsilon)\right)\,,\quad t>0,x\in\R^2\,.
\ee
%%%%%%%%%%%%%%%%%%%%%%%%%%%%%%%%%%%%%%%%%%%%%%%%%%%%%%%%%%%%%%%%%%%%%%%%%%%%%%%%%%%%%%%%%%%%%%%%%%%%%%%%%%%%%%%%%%%%%%%%%%%%%%%%%%%%%%%%%%%
\subsection{Derivation of the massless equation}
The aim of this subsection is to formally derive the effective Dirac equation for the amplitudes $\psi_{j}$ appearing in \eqref{eq:approximate} through a multiscale expansion (see e.g. \cite{arbunichsparber,ilanweinstein}). 

Since the coefficients $\psi_{j}(\eps x)$ and the Bloch functions $\Phi_{j}(x)$ vary on different scales, one can consider $x$ and $y:=\eps x, 0<\eps\ll1$, as independent variables. Moreover, we look for solution to \eqref{eq:gp} as formal power series in $\eps$, as follows
\be\label{multiscaleu}
u_{\eps}=\sqrt{\eps}U_{\eps}(x,y),\qquad U_{\eps}(x,y)=U_{0}(x,y)+\eps U_{1}(x,y)+\eps^{2}U_{2}(x,y)+...
\ee 
We moreover impose $K_*$-pseudoperiodicity with respect to $x$, i.e. 
\be\label{pseudoperiodicmultiscale}
U_{\eps}(x+v,y)=e^{-iK_*\cdot v}U_{\eps}(x,y),\qquad\forall v\in\Lambda, x,y\in\R^{2}.
\ee 
Similarly, we look for $\mu$ of the form
\be\label{multiscalemu}
\mu=\mu_{\eps}=\mu_{*}+\eps\mu_{1}+\eps^{2}\mu_{2}+...
\ee
Rewriting \eqref{eq:gp} in terms of $U_{\eps}$ and $\mu_{\eps}$ then gives
\be\label{multiscaleNLS}
\left(-\left(\nabla_{x}+\eps\nabla_{y} \right)^{2}+V(x)-\mu_{\eps}\right)U_{\eps}(x,y)=\eps\left\vert U_{\eps}(x,y) \right\vert^{2}U_{\eps}(x,y).
\ee
Plugging (\ref{multiscaleu},\ref{multiscalemu}) into (\ref{multiscaleNLS}) one finds a hierarchy of equations. 

At order $\mathcal{O}(\eps^{0})$ we obtain
\be\label{order0}
(-\Delta_{x}+V-\mu_{*})U_{0}=0.
\ee
Recall that $\ker_{L^{2}_{K_*}}(-\Delta+V-\mu_{*})=\operatorname{Span}\left\{\Phi_{1},\Phi_{2} \right\}$, and then by (\ref{pseudoperiodicmultiscale}) we have
\be\label{uzero}
U_{0}(x,y)=\psi_{1}(y)\Phi_{1}(x)+\psi_{2}(y)\Phi_{2}(x),
\ee
where the amplitudes are to be determined solving the next equation in the formal expansion. Here $L^2_{K_*}$ denotes square integrable functions satisfying the pseudo-periodicity condition in \eqref{pseudoperiodic}.

The equation for $\mathcal{O}(\eps)$ terms reads
\be\label{uuno}
(-\Delta_{x}+V-\mu_{*})U_{1}=\left(2\nabla_{x}\cdot\nabla_{y}+\mu_{1} \right)U_{0}+\left\vert U_{0}\right\vert^{2}U_{0}.
\ee
By Fredholm alternative, solvability of the above equation requires its right hand side to be $L^{2}$-orthogonal to the kernel of $(-\Delta_{x}+V-\mu_{*})$. Then the functions $\psi_{j}$ are determined imposing orthogonality to the Bloch functions $\Phi_{k}$. For simplicity we deal with linear part and the cubic term in the right hand side of (\ref{uuno}) separately.  

The linear terms can be calculated using the following lemma from \cite{FWhoneycomb}
\begin{lemma}\label{products}
Let $\zeta=(\zeta_{1},\zeta_{2})\in\C^{2}$ be a vector. Then there exists $\lambda\in\C\setminus\{0\}$ such that we have
\begin{equation}\label{linearort}
\begin{split}
&\langle\Phi_{k},\zeta\cdot\nabla\Phi_{k}\rangle_{L^{2}(\Omega)}=0,\qquad k=1,2, \\& 2i\langle\Phi_{1},\zeta\cdot\nabla\Phi_{2}\rangle_{L^{2}(\Omega)}=\overline{2i\langle\Phi_{2},\zeta\cdot\nabla\Phi_{1}\rangle_{L^{2}(\Omega)}}=-\overline{\lambda}\left(\zeta_{1}+i\zeta_{2} \right),\\& 2i\langle\Phi_{2},\zeta\cdot\nabla\Phi_{1}\rangle_{L^{2}(\Omega)}=-\lambda(\zeta_{1}-i\zeta_{2})
\end{split}
\end{equation}
\end{lemma}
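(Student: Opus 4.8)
The plan is to deduce \eqref{linearort} from the symmetry properties of the Dirac-point Bloch functions $\Phi_1,\Phi_2$ established in \cite{FWhoneycomb}. I would rely on two structural facts. First, since $K_*$ is a vertex of the hexagonal Brillouin zone and $V$ is (after translation) invariant under the $\tfrac{2\pi}{3}$-rotation $\cR$, the operator $\cR$ preserves the space of $K_*$-pseudoperiodic functions, commutes with $-\Del+V$, and satisfies $\cR^3=\operatorname{Id}$. Second, on the two-dimensional degenerate eigenspace $\operatorname{Span}\{\Phi_1,\Phi_2\}$ one may choose $\Phi_1,\Phi_2$ to be eigenfunctions of $\cR$, say $\cR\Phi_1=\tau\Phi_1$ and $\cR\Phi_2=\bar\tau\Phi_2$ with $\tau:=e^{2\pi\I/3}$ (so $\bar\tau=\tau^2$); set $\tau_1:=\tau$, $\tau_2:=\bar\tau$ for brevity. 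Throughout I would use that $\overline{\Phi_j}\Phi_k$ is $\Lambda$-periodic — immediate from the pseudoperiodicity — so that the inner products $\langle\cdot,\cdot\rangle_{L^2(\Omega)}$ are independent of the choice of fundamental cell $\Omega$, a change of variables by $\cR$ is allowed, and integration by parts over $\Omega$ produces no boundary terms.

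First I would treat the diagonal terms. For the $\C$-linear functional $L_k(\zeta):=\langle\Phi_k,\zeta\cdot\nabla\Phi_k\rangle_{L^2(\Omega)}$, writing $\Phi_k=\bar\tau_k\cR\Phi_k$, using unitarity of $\cR$, and making the change of variables $x\mapsto R^{*}x$ (with the chain rule for $\nabla(\cR\Phi_k)$), the phases cancel and one gets $L_k(\zeta)=L_k(R^{-1}\zeta)$; thus $L_k$ is invariant under the order-three rotation $R$. As $R$ on $\C^2$ has eigenvalues $e^{\pm2\pi\I/3}\neq1$, it admits no nonzero invariant vector, so $L_k\equiv0$, which is the first line of \eqref{linearort}.

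Next, for the off-diagonal terms I would set $M(\zeta):=\langle\Phi_1,\zeta\cdot\nabla\Phi_2\rangle_{L^2(\Omega)}$ and $N(\zeta):=\langle\Phi_2,\zeta\cdot\nabla\Phi_1\rangle_{L^2(\Omega)}$. The same computation, now keeping track of the phases, gives $M(R\zeta)=\tau_1\bar\tau_2M(\zeta)=\bar\tau M(\zeta)$ and $N(R\zeta)=\tau_2\bar\tau_1N(\zeta)=\tau N(\zeta)$. Decomposing $\zeta\in\C^2$ along the eigenvectors $(1,\I)$, $(1,-\I)$ of $R$ (eigenvalues $\tau$, $\bar\tau$),
\[
\zeta=\tfrac{\zeta_1-\I\zeta_2}{2}(1,\I)+\tfrac{\zeta_1+\I\zeta_2}{2}(1,-\I),
\]
and comparing coefficients, one finds $M$ vanishes on the $\tau$-eigenline and $N$ on the $\bar\tau$-eigenline; hence $M(\zeta)=c_1(\zeta_1+\I\zeta_2)$ and $N(\zeta)=c_2(\zeta_1-\I\zeta_2)$ for some $c_1,c_2\in\C$. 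Defining $\lambda$ by $2\I c_2=-\lambda$ gives the last line of \eqref{linearort}. To obtain the middle line I would integrate by parts over $\Omega$, which yields the identity $M(\zeta)=-\overline{N(\bar\zeta)}$ for all $\zeta$; specialized to real $\zeta$ and combined with the forms just found, this forces $2\I c_1=-\bar\lambda$, i.e. $2\I\langle\Phi_1,\zeta\cdot\nabla\Phi_2\rangle=\overline{2\I\langle\Phi_2,\zeta\cdot\nabla\Phi_1\rangle}=-\bar\lambda(\zeta_1+\I\zeta_2)$.

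It remains to check $\lambda\neq0$, and this I expect to be the one genuinely substantive point. It is the nondegeneracy built into the very notion of a Dirac point: by degenerate perturbation theory the Hermitian $2\times2$ matrix governing $\mu_N(k),\mu_{N+1}(k)$ for $k$ near $K_*$ has, to leading order, zero diagonal (by the diagonal step) and off-diagonal entries proportional to $\lambda(\delta k_1\mp\I\,\delta k_2)$, where $\delta k:=k-K_*$ (by the off-diagonal step), hence eigenvalues $\pm|\lambda|\,|\delta k|+o(|\delta k|)$; comparing with the conical dispersion \eqref{conicaltouching}, whose slope is $|\lambda|>0$, gives $\lambda\neq0$ — alternatively one simply invokes \cite{FWhoneycomb}. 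To summarize, everything but the two imported ingredients (the $\cR$-eigenfunction structure of $\operatorname{Span}\{\Phi_1,\Phi_2\}$ and $\lambda\neq0$, both from \cite{FWhoneycomb}) reduces to the nonexistence of $R$-invariant vectors in $\C^2$, a coefficient comparison in the $R$-eigenbasis, and one integration by parts; so I anticipate the main obstacle is importing that structural input correctly rather than the computation itself.
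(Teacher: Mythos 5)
The paper offers no proof of this lemma at all: it is imported verbatim from \cite{FWhoneycomb} (``the following lemma from \cite{FWhoneycomb}''), so there is no internal argument to compare against. Your proposal is a correct reconstruction of the Fefferman--Weinstein argument, and the two structural facts you flag as imported are exactly the right ones: the eigenfunction structure $\mathcal{R}\Phi_1=\tau\Phi_1$, $\mathcal{R}\Phi_2=\bar\tau\Phi_2$ is the same input the paper itself uses in the cubic computation \eqref{cubicprojection}, and $\lambda\neq0$ is the nondegeneracy already asserted in \eqref{conicaltouching}. The mechanics check out: the change of variables gives $L_k(\zeta)=L_k(R\zeta)$ and, since $I-R$ is invertible on $\C^2$, forces $L_k\equiv0$; the covariance relations $M(R\zeta)=\bar\tau M(\zeta)$, $N(R\zeta)=\tau N(\zeta)$ together with the eigenbasis $(1,\pm\I)$ of $R$ (eigenvalues $\tau,\bar\tau$) pin down $M,N$ up to the constants $c_1,c_2$; and the periodic integration by parts yields $M(\zeta)=-\overline{N(\bar\zeta)}$, hence $c_1=-\bar c_2$. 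One point worth stating explicitly if this were written out: the middle chain of equalities in \eqref{linearort}, $2\I\langle\Phi_1,\zeta\cdot\nabla\Phi_2\rangle=\overline{2\I\langle\Phi_2,\zeta\cdot\nabla\Phi_1\rangle}$, is literally true only for real $\zeta$ (the conjugation produces $\bar\zeta$ on the right); your restriction to real $\zeta$ when extracting $c_1=-\bar c_2$ is the correct reading, and the resulting formulas for $M$ and $N$ then extend to all complex $\zeta$ by linearity. This is an imprecision inherited from the lemma's statement, not a gap in your argument.
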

Notice that $\left(\nabla_{x}\cdot\nabla_{y}\right)U_{0}=\sum^{2}_{j=1}\nabla_{y}\psi_{j}\cdot\nabla_{x}\Phi_{j}$ and then applying Lemma \ref{products} with $\zeta=\nabla_{y}\Phi_{j}, j=1,2$ we get
\be\label{lineareffective}
\begin{split}
& 2i\langle\Phi_{1},\nabla_{y}\psi_{2}\cdot\nabla\Phi_{2}\rangle_{L^{2}(\Omega)}=\overline{2i\langle\Phi_{2},\nabla_{y}\psi_{2}\cdot\nabla\Phi_{1}\rangle_{L^{2}(\Omega)}}=-\overline{\lambda}\left(\partial_{y_{1}}+i\partial_{y_{2}} \right)\psi_{2},\\& 2i\langle\Phi_{2},\nabla_{y}\psi_{1}\cdot\nabla\Phi_{1}\rangle_{L^{2}(\Omega)}=-\lambda(\partial_{y_{1}}-i\partial_{y_{2}} )\psi_{1}
\end{split}
\ee
Thus we see that taking the $L^{2}(\Omega)$ scalar product of the linear part in the right hand side of (\ref{uuno}) with the Bloch functions $\Phi_{j}$ gives the linear part of \eqref{eq:effective}. We now want to show that the cubic nonlinearity in \eqref{eq:effective} is obtained calculating the same product for the cubic term in (\ref{uuno}). By symmetry taking this projection many terms vanish. The cubic term reads  
\be
\left\vert U_{0}\right\vert^{2}U_{0}=\sum_{1\leq j,k,l\leq2}\psi_{j}\psi_{k}\overline{\psi_{k}}\Phi_{j}\Phi_{k}\overline{\Phi_{l}}.
\ee 
Let us consider, for instance, the term $\psi_{1}\psi_{1}\overline{\psi_{2}}\Phi_{1}\Phi_{1}\overline{\Phi_{2}}$ and then project it onto $\Phi_{1}$. We compute
\be\label{cubicprojection}
\begin{split}
\langle \Phi_{1},\Phi_{1}\Phi_{1}\overline{\Phi_{2}}\rangle_{L^{2}(\Omega)}&=\int_{\Omega}\overline{\Phi_{1}(x)}\Phi_{1}(x)\Phi_{1}(x)\overline{\Phi_{2}(x)} dx \\&=^{x=R^{*}y}\int_{R\Omega}\overline{\Phi_{1}(R^{*}y)}\Phi_{1}(R^{*}y)\Phi_{1}(R^{*}y)\overline{\Phi_{2}(R^{*}y)} dy\\& \int_{R\Omega}\overline{\tau\Phi_{1}(y)}\tau\Phi_{1}(y)\tau\Phi_{1}(y)\overline{\overline{\tau}\Phi_{2}(y)} dy\\ & \tau^{2}\int_{\Omega}\overline{\Phi_{1}(x)}\Phi_{1}(x)\Phi_{1}(x)\overline{\Phi_{2}(x)} dx=\tau^{2}\langle \Phi_{1},\Phi_{1}\Phi_{1}\overline{\Phi_{2}}\rangle_{L^{2}(\Omega)}
\end{split}
\ee
where $R$ is the rotation matrix (\ref{rotationmatrix}), and we used that  $\cR\Phi_{1}=\tau\Phi_{1}$ and $\cR\Phi_{2}=\overline{\tau}\Phi_{2}$ with $\tau=\exp(2i\pi/3)$, see \cite{FWhoneycomb}. From (\ref{cubicprojection}) we get
$$(1-\tau^{2})\langle \Phi_{1},\Phi_{1}\Phi_{1}\overline{\Phi_{2}}\rangle_{L^{2}(\Omega)}=0, $$ and thus $$ \langle \Phi_{1},\Phi_{1}\Phi_{1}\overline{\Phi_{2}}\rangle_{L^{2}(\Omega)}=0.$$
Iterating this calculations one can check that 
\be\label{cubiceffective}
\left\{\begin{aligned}
      \langle\Phi_{1},\left\vert U_{0}\right\vert^{2}U_{0}\rangle_{L^{2}(\Omega)}&=(2\beta_{2}\vert\psi_{1}\vert^{2}+\beta_{1}\vert\psi_{2}\vert^{2})\psi_{1} \\
    \langle\Phi_{2},\left\vert U_{0}\right\vert^{2}U_{0}\rangle_{L^{2}(\Omega)} &=(\beta_{1}\vert\psi_{1}\vert^{2}+2\beta_{2}\vert\psi_{2}\vert^{2})\psi_{2} 
\end{aligned}\right.
\ee
thus recovering the cubic term in \eqref{eq:effective}, with
\be\label{betas}
\beta_{1}:=\int_{\Omega}\vert \Phi_{1}\vert^{4}dx=\int_{\Omega}\vert \Phi_{2}\vert^{4}dx,\qquad \beta_{2}:=\int_{\Omega}\vert\Phi_{1}\vert^{2}\vert\Phi_{2}\vert^{2}.
\ee
 It is then easy to see that the stationary version of \eqref{eq:effective} (i.e. $\partial_t\Xi_1=\partial_t\Xi_1=0$) appears as compatibility condition for the solvability of (\ref{uuno}), combining (\ref{lineareffective}, \ref{cubiceffective}) and taking $\mu_{1}=0$ in (\ref{uuno}).

%%%%%%%%%%%%%%%%%%%%%%%%%%%%%%%%%%%%%%%%%%%%%%%%%%%%%%%%%%%%%%%%%%%%%%%%%%%%%%%%%%%%%%%%%%%%%%%%%%%%%%%%%%%%%%%%%%%%%%%%%%%%%%%%%%%%%%%%%%%
\subsection{Derivation of the effective mass tem}
The same multiscale argument as in the previous Section allows to derive the mass term in \eqref{eq:effective}, induced by a suitable perturbation.

 As shown in \cite[Appendix]{FWhoneycomb}, breaking the $\mathcal{PT}$ symmetry lifts the conical degeneracy in the dispersion relation of a honeycomb Schr\"{o}dinger operator $\left(-\Delta+V \right)$ admitting Dirac points. Let us consider the following equation
\be\label{perturbedgp}
\left(-\Delta+V+\eps W-\mu_{*} \right)u=\vert u\vert^{2}u,
\ee
that is, we consider a potential perturbation of \eqref{eq:gp} where we add a linear term $W$ breaking parity. More precisely, we assume that $W$ is \emph{odd}
\be\label{odd}
W(-x)=-W(x),\qquad \forall x\in\R^{2}.
\ee
In this case, compared to the analysis in the previous Section, we get an additional term at order $\mathcal{O}(\eps)$ corresponding to the potential $\eps W$ in (\ref{perturbedgp}). Then we have to compute the projections
\be\label{wprojections}
\langle WU_{0},\Phi_{k}\rangle_{L^{2}(\Omega)}=\sum^{2}_{j=1}\psi_{j}\langle W\Phi_{j},\Phi_{k}\rangle_{L^{2}(\Omega)},\qquad k=1,2.
\ee
Recall that \be\label{phi12}\Phi_{2}(x)=\overline{\Phi_{1}(-x)},\ee and this relation allows us to compute
\be\label{12projection}
\begin{split}
\langle W\Phi_{2},\Phi_{1}\rangle_{L^{2}(\Omega)}&=\int_{\Omega}\left( W\Phi_{2}\right)(x)\overline{\Phi_{1}(x)}dx=\int_{\Omega} W(x)\overline{\Phi_{1}(-x)}\overline{\Phi_{1}(x)}dx\\&=^{y=-x}\int_{-\Omega} W(-y)\overline{\Phi_{1}(y)}\overline{\Phi_{1}(-y)}dy=-\int_{\Omega}\left( W\Phi_{2}\right)(y)\overline{\Phi_{1}(y)}dy\\&=-\langle W\Phi_{2},\Phi_{1}\rangle_{L^{2}(\Omega)},
\end{split}
\ee
where we have also used (\ref{odd}). We thus obtain
\be
\langle W\Phi_{2},\Phi_{1}\rangle_{L^{2}(\Omega)}=\langle W\Phi_{1},\Phi_{2}\rangle_{L^{2}(\Omega)}=0.
\ee
Moreover, arguing as in (\ref{12projection}) one easily finds
\be
\langle W\Phi_{1},\Phi_{1}\rangle_{L^{2}(\Omega)}=-\langle W\Phi_{2},\Phi_{2}\rangle_{L^{2}(\Omega)}.
\ee
and then
\be
\begin{split}
&\sum^{2}_{j=1}\psi_{j}\langle W\Phi_{j},\Phi_{1}\rangle_{L^{2}(\Omega)}=\psi_1\langle W\Phi_{1},\Phi_{1}\rangle_{L^{2}(\Omega)}\\& \sum^{2}_{j=1}\psi_{j}\langle W\Phi_{j},\Phi_{2}\rangle_{L^{2}(\Omega)}=-\psi_2\langle W\Phi_{1},\Phi_{1}\rangle_{L^{2}(\Omega)}.
\end{split}
\ee
Assuming that $m:=\langle W\Phi_{1},\Phi_{1}\rangle_{L^{2}(\Omega)}>0$, we obtain the mass term in \eqref{eq:effective}.

%%%%%%%%%%%%%%%%%%%%%%%%%%%%%%%%%%%%%%%%%%%%%%%%%%%%%%%%%%%%%%%%%%%%%%%%%%%%%%%%%%%%%%%%%%%%%%%%%%%%%%%%%%%%%%%%%%%%%%%%%%%%%%%%%%%%%%%%%%%%%%%%%%%%%%%%%%%%%%%%%%%%%%%%%%%%%%%%%%%%%%%%
\section{Proof of the main result}\label{sec:proof}
In this section, we prove the existence of branches of bound states for \eqref{eq:nld} that bifurcate from the trivial solution at the positive band-edge of the spectrum of $\cD$. Those solutions are constructed from bound states of a suitable nonlinear Schr\"odinger equation \eqref{eq:nls}, which (after scaling) gives the asymptotic profile as $\omega\to m$.

\smallskip

We start by rewriting \eqref{eq:nld} componentwise. Setting $\psi=(\psi_1,\psi_2)^T$, equation \eqref{eq:nld} becomes the system
\be\label{eq:componenti}
\left\{
\begin{array}{l}
-i(\partial_1-i\partial_2)\psi_2 =(\beta_1 \vert \psi_2\vert^2+2\beta_2 \vert\psi_1\vert^2)\psi_1-(m-\omega)\psi_1\,\\[.3cm]
-i (\partial_1+i\partial_2)\psi_1=-(2\beta_2 \vert\psi_2\vert^2+\beta_1 \vert\psi_1\vert^2)\psi_2-(m+\omega)\psi_2\,,
\end{array}
\right.
\ee
that can be regarded as a functional equation of the form
\[
\cH(\psi_1,\psi_2)=0\,,
\]
where $\cH:X\times X\to L^2(\R^2,\C^2)$ is the map defined by
\[
\cH(\psi_1, \psi_2)=\begin{pmatrix}
-i(\partial_1-i\partial_2)\psi_2 -(\beta_1 \vert \psi_2\vert^2+2\beta_2 \vert\psi_1\vert^2)\psi_1+(m-\omega)\psi_1\\
-i (\partial_1+i\partial_2)\psi_1+(2\beta_2 \vert\psi_2\vert^2+\beta_1 \vert\psi_1\vert^2)\psi_2+(m+\omega)\psi_2
\end{pmatrix}\,,
\]
with 
\be\label{eq:space}
X:=H^1(\R^2,\C)\,.
\ee
In what follows, we shall consider the subspace
\be\label{eq:radialspace}
X_r\subset X\times X
\ee
given by functions of the form \eqref{eq:ansatz}. For simplicity they will be denoted by $(u,v)$, where those functions are the radial factors in \eqref{eq:ansatz}.
%%%%%%%%%%%%%%%%%%%%%%%%%%%%%%%%%%%%%%%%%%%%%%%%%%%%%%%%%%%%%%%%

\subsection{Rescaling the equation}
\label{sec-scaling}
Plugging the ansatz \eqref{eq:ansatz} in \eqref{eq:componenti} leads to the follows system for the real valued functions $u,v$

\begin{equation}
\left\{
\label{eq:uv}
\begin{array}{l}
\displaystyle-u'+\frac{u}{r} =(\beta_1 u^2+2\beta_2 v^2)v-(m-\omega)v\,\\[.3cm]
v' =-(2\beta_2 u^2+\beta_1 v^2)u-(m+\omega)u\,.
\end{array}
\right.
\end{equation}

Now set $\eps:=(m-\omega)$, and consider the following rescaling 
\be\label{eq:scaleuv} 
u_\eps(r)=\eps f(\sqrt{\eps}r)\,,\qquad v_\eps(r)=\sqrt{\eps}g(\sqrt{\eps}r)\,,\qquad r>0\,,
\ee
so that, by \eqref{eq:uv}, after some straightforward computations we find the equations for $f(\rho),g(\rho)$:
\be
\label{eq:diracscaled}
 \left\{
 \begin{array}{l}
\displaystyle f'+\frac{f}{\rho}=(\eps\beta_1f^2+2\beta_2g^2)g-g\\[.3cm]
  \displaystyle g'= -(2\beta_2\eps^2f^2+\beta_1\eps g^2)f-(2m-\eps)f\,.
 \end{array}
 \right.
\ee
where we also used the fact that $\eps:=m-\omega$ and then $m+\omega=2m-\ep$. 

\begin{remark}
The branch point of the solutions is given by $\eps=0$. The equivalence between \eqref{eq:uv} and \eqref{eq:diracscaled} is valid only for $\ep>0$, while \eqref{eq:diracscaled} makes sense for arbitrary $\ep\in\R$.

\end{remark}

%%%%%%%%%%%%%%%%%%%%%%%%%%%%%%%%%%%%%%%%%%%%%%%%%%%%%%%%%%%%%%%%

\subsection{Solutions of the rescaled problem}
Our goal is to apply the implicit function theorem to prove the existence of a local branch of solutions to \eqref{eq:diracscaled}.

To this aim we rewrite reformulate the problem as follows. Define the map
\[
\cF:\R\times X_r\longrightarrow L^2(\R)\times L^2(\R)\,,
\]
with $X_r$ in \eqref{eq:radialspace}, and acting as
\be
\label{eq:map}
\cF(\ep,u_{\ep},v_{\ep}):=\begin{pmatrix}\displaystyle f'+\frac{f}{\rho}-(\eps\beta_1f^2+2\beta_2g^2)g+g\\[.2cm]
 \displaystyle g' +(2\beta_2\eps^2f^2+\beta_1\eps g^2)f+(2m-\eps)f\end{pmatrix}
\ee

Therefore, the original problem is equivalent to the following
\be
\left\{
\begin{array}{l}
\label{eq:funceq}
\displaystyle (\ep,u_\ep,v_\ep)\in\R\times X_r\\[.2cm]
\displaystyle u_\ep,\,v_\ep\neq0\\[.2cm]
\displaystyle \cF(\ep,u_\ep,v_\ep)=0.
\end{array}
\right. \,\qquad \eps>0\,.
\ee

\begin{remark}\label{rem:onehalf}
In order to simplify the notation, without loss of generality, we take $m=1/2$ and $\beta_{2}=1/2$.
%% altrimenti (53) e (55) sono incoerenti
\end{remark}

\begin{proposition}\label{prop:smallepssol}
There exists $\eps_0>0$ such that \eqref{eq:funceq} admits a solution for $\eps\in(-\eps_0,\eps)$.
\end{proposition}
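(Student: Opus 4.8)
The plan is to apply the implicit function theorem to the map $\cF$ at the branch point $\eps = 0$, after first identifying the correct limiting object. At $\eps = 0$ (with the normalizations $m = 1/2$, $\beta_2 = 1/2$ of Remark~\ref{rem:onehalf}), the system \eqref{eq:diracscaled} becomes $f' + f/\rho = g^2 g - g$ and $g' = -f$. Substituting the second equation into the first, one gets a single second-order equation for $g$: differentiating $g' = -f$ gives $g'' = -f'$, and using $f' = g^3 - g - f/\rho = g^3 - g + g'/\rho$ we obtain $-g'' = g^3 - g + g'/\rho$, i.e. $g'' + g'/\rho - g + g^3 = 0$, which is exactly the radial form of $-\Delta g - g^3 + g = 0$ on $\R^2$. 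Thus the limiting profile is $g_0 = f$ (the positive ground state of the NLS in the theorem statement) and $f_0 = -g_0' = -f'$. This matches the asymptotics claimed in Theorem~\ref{thm:main}.

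The key steps, in order, are: (i) Record that $(\ep, u_0, v_0) := (0, -f', f)$ (in the scaled variables $(f_0, g_0) = (-f', f)$) is a solution of $\cF(\ep, \cdot, \cdot) = 0$ in $\R \times X_r$; here one must check $f' \in H^1$, which follows from elliptic regularity and exponential decay of the NLS ground state $f$. (ii) Verify that $\cF$ is $C^1$ (indeed smooth) from $\R \times X_r$ into $L^2 \times L^2$ in a neighborhood of this point; the nonlinear terms are polynomial in $f, g$ and the Sobolev embedding $H^1(\R^2) \hookrightarrow L^p$ for all $p < \infty$ handles the cubic nonlinearities, with the $\eps$-dependent terms being harmless since they carry positive powers of $\eps$. (iii) Compute the partial differential $D_{(u,v)}\cF(0, u_0, v_0)$ and show it is an isomorphism from $X_r$ onto $L^2 \times L^2$. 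The linearized operator, in the $(f,g)$ variables, sends $(\phi, \chi)$ to $(\phi' + \phi/\rho - 3 g_0^2 \chi + \chi,\ \chi' + \phi)$; eliminating $\phi$ again reduces the invertibility question to the Schr\"odinger operator $L_+ := -\Delta - 3f^2 + 1$ acting on radial $H^2$ functions (after accounting for the first-order structure). (iv) Conclude by the implicit function theorem that there is $\eps_0 > 0$ and a $C^1$ branch $\eps \mapsto (u_\eps, v_\eps)$ of solutions of $\cF(\eps, u_\eps, v_\eps) = 0$ for $|\eps| < \eps_0$, with $(u_\eps, v_\eps) \to (u_0, v_0)$; for $\eps > 0$ these are genuine (nontrivial, since close to $(-f', f) \neq 0$) solutions of the original problem, and unwinding the rescaling \eqref{eq:scaleuv} and setting $e_1, e_2$ to be the (scaled) differences from the limiting profile gives the bounds $\|e_j\|_{H^1} \leq C\eps$ claimed in Theorem~\ref{thm:main}.

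The main obstacle is step (iii): establishing that the linearization at the branch point is an isomorphism onto $L^2 \times L^2$. This requires knowing the \emph{non-degeneracy} of the NLS ground state $f$ of $-\Delta f - f^3 + f = 0$ on $\R^2$ \emph{within the class of radial functions}, i.e. that the kernel of $L_+ = -\Delta - 3f^2 + 1$ on $L^2_{\mathrm{rad}}(\R^2)$ is trivial. This is a known fact (Kwong-type uniqueness/non-degeneracy for the ground state), but it must be invoked carefully; moreover the first-order Dirac-type structure of $D_{(u,v)}\cF$ is not literally a Schr\"odinger operator, so one has to be somewhat careful reducing it — in particular checking that the reduction preserves the relevant function spaces and that there is no extra cokernel coming from the $1/\rho$ singularity (this is precisely where the symmetric ansatz \eqref{eq:ansatz} is used: it forces the right boundary behavior at $\rho = 0$). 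A secondary technical point is ensuring the target space is correct — one should check surjectivity directly, e.g. by writing down the inverse via the ODE system and verifying it maps $L^2 \times L^2$ back into $X_r$, using the exponential decay of $f$ to control behavior at infinity.
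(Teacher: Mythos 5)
Your proposal is correct and follows essentially the same route as the paper: apply the implicit function theorem at $\eps=0$ around the profile $(u_0,v_0)=(-f',f)$ built from the radial NLS ground state, and reduce invertibility of the linearization to the non-degeneracy (trivial radial kernel) of $L_+=-\Delta-3f^2+1$. The only cosmetic difference is that the paper establishes surjectivity by splitting $D_{(u,v)}\cF(0,u_0,v_0)=J+K$ into an isomorphism (inverted via Lax--Milgram for $-\Delta+1$) plus a compact perturbation and invoking the Fredholm alternative, rather than constructing the inverse directly from the ODE system as you sketch; both close the same gap, and your explicit linearization (with the factor $3$ from differentiating $g^3$) is in fact the correct one.
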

\begin{remark}
The above proposition is equivalent to the main result stated in Theorem \ref{thm:main}. Then we equivalently prove the former.
\end{remark}
%%%%%%%%%%%%%%%%%%%%%%%%
\subsubsection{Solutions for $\ep=0$}
\label{sec-nls}
Take $\ep=0$. Looking for non-trivial solutions of \eqref{eq:funceq} in $X_r$ we get
\be\label{eq:eps0}
 \left\{
 \begin{array}{l}
 \displaystyle f'+\frac{f}{\rho}=g^3-g\\[.3cm]
  \displaystyle g'=- f\,.
 \end{array}\right.\,,
\ee
see Remark \ref{rem:onehalf}. Then $(f,g)$ solves the following nonlinear Schr\"odinger equation 
\be
\label{eq:NLSaux} 
\left\{
 \begin{array}{l}
 \displaystyle -g''-\frac{1}{\rho}g'-g^3+g=0\\[.3cm]
  \displaystyle f=-g'\,.
 \end{array}\right.\,.
\ee
Since $\frac{\partial^2}{\partial \rho^2}+\frac{1}{\rho}\frac{\partial}{\partial\rho}$ is the radial part of the two-dimensional Laplacian, one immediately recognizes that $g$ must be a radial solution of the following elliptic equation
\be\label{eq:nls}
-\Delta U-U^3+U=0\,\qquad\mbox{on $\R^2$.}
\ee
It is well known that such equation admits a unique positive radial \emph{ground state solution} $U$, which is smooth and exponentially decaying at infinity \cite[Thm. 8.1.5]{CAZ}. Given such function, we shall consider the solution to \eqref{eq:NLSaux} given by
\be
\label{eq:NLSsol}
(u_0,v_0)=(U,V)\,,\qquad V=-U'\,.
\ee

\subsubsection{Solutions for small $\ep$}
\label{sec-smallep}

In order to prove existence of solutions of \eqref{eq:funceq} for small values of $\ep$ we have to check the assumptions of the implicit function theorem.

It is not hard to verify that the map $\cF$ is of class $C^1$, and then we need to prove the following

\begin{lemma}
\label{prop:linearized}
The differential of $\cF$ with respect to $(u,v)$-variables, $D_{(u,v)}\cF$, evaluated at $(0,u_0,v_0)$ is an isomorphism.
\end{lemma}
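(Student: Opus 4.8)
The plan is to compute the linearization of $\cF$ at $(0,u_0,v_0)=(0,U,-U')$ explicitly, reduce the invertibility question to a scalar Schrödinger-type operator on radial functions, and then invoke the nondegeneracy of the ground state $U$ of $-\Delta U - U^3 + U = 0$. First I would differentiate the two components of $\cF$ in \eqref{eq:map} with respect to $(f,g)$ at $\eps=0$: since all $\eps$-dependent terms in \eqref{eq:diracscaled} drop out, the linearized system for a perturbation $(\phi,\gamma)$ is
\be
\label{eq:linsys}
\left\{
\begin{array}{l}
\displaystyle \phi' + \frac{\phi}{\rho} - (3g^2-1)\gamma = a\\[.2cm]
\displaystyle \gamma' + \phi = b
\end{array}
\right.\,,
\ee
where I have already used $m=1/2$ and $g=V=-U'$, and $(a,b)\in L^2\times L^2$ is an arbitrary right-hand side. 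The task is to show that for every $(a,b)$ this system has a unique solution $(\phi,\gamma)\in X_r$.

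Next I would eliminate $\phi$ using the second equation, $\phi = b - \gamma'$, and substitute into the first. This gives
\[
-\gamma'' + \frac{b-\gamma'}{\rho}\cdot(\text{sign issue}) \;-\;(3g^2-1)\gamma \;=\; a - b' ,
\]
more carefully: $\phi' = b' - \gamma''$ and $\phi/\rho = b/\rho - \gamma'/\rho$, so the first equation becomes
\be
\label{eq:scalarop}
-\gamma'' - \frac{1}{\rho}\gamma' - (3g^2 - 1)\gamma = a - b' - \frac{b}{\rho} =: \tilde b .
\ee
Now the operator on the left is precisely $-\Delta_{\mathrm{rad}} + 1 - 3g^2$ acting on radial functions. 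Since $g = -U' $, this is \emph{not} quite the standard linearized NLS operator $L_+ := -\Delta + 1 - 3U^2$; rather it is the Schrödinger operator with potential built from $3(U')^2$. The key structural fact I would exploit is that differentiating the ground state equation $-\Delta U - U^3 + U = 0$ and using radiality shows how $U'$ relates to the kernel of $L_+$: on radial functions $L_+$ is known to be invertible (the ground state is nondegenerate among radial functions, the only radial kernel element of $L_+$ on all of $\R^2$ being excluded by decay — see the standard references, e.g. Kwong, Weinstein). So the main point is to identify the operator in \eqref{eq:scalarop} correctly and match it — possibly after a further substitution $\gamma = U'\eta$ or an integrating-factor trick exploiting that $U'$ solves the homogeneous version — with an operator whose radial invertibility is classical.

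The main obstacle I anticipate is exactly this identification and the bookkeeping around the $1/\rho$ terms and the space $X_r$: one must check that the formal solution $(\phi,\gamma)$ produced by solving \eqref{eq:scalarop} actually lies in $H^1(\R^2)$ (both components), that the map $\tilde b \mapsto \gamma$ is bounded $L^2 \to H^2$, and crucially that the relevant scalar operator has trivial kernel on the radial subspace — the ansatz \eqref{eq:ansatz} is designed precisely so that the angular momentum channel kills the translational zero modes $\partial_{x_i}U$ of $L_+$, leaving an invertible operator. Once invertibility of the scalar operator on radial $L^2$ is in hand, injectivity of $D_{(u,v)}\cF$ follows by setting $a=b=0$ and reading the implications backward, and surjectivity follows by constructing $\gamma$ from \eqref{eq:scalarop} and then $\phi$ from $\phi = b - \gamma'$, checking both lie in $X$; boundedness of the inverse then comes from elliptic estimates, giving the isomorphism.
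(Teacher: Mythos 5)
There is a genuine gap, and it sits exactly at the point you flag as your ``main obstacle'': the identification of the scalar operator. You set $g=V=-U'$, so your reduced operator becomes $-\Delta_{\mathrm{rad}}+1-3(U')^2$, which matches no classical nondegeneracy statement, and you leave its invertibility to an unspecified ``further substitution''. But the identification is wrong: in the rescaling \eqref{eq:scaleuv} it is the \emph{second} component $g$ that solves the scalar NLS. Indeed the $\eps=0$ system \eqref{eq:eps0} gives $f=-g'$ and $-g''-g'/\rho-g^3+g=0$, so the limiting profile is $(f_0,g_0)=(-U',U)$ with $U$ the positive radial ground state (this is also what Theorem \ref{thm:main} asserts, since $v_\eps=\sqrt{\eps}\,g(\sqrt{\eps}\,\cdot)$ carries the profile $U$; the ordering in the paper's $(u_0,v_0)=(U,-U')$ is admittedly confusing, but the derivation from \eqref{eq:eps0} is unambiguous). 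Since the cubic term in $\cF$ is $-g^3$, linearizing produces the potential $3g_0^2=3U^2$, and your scalar operator is precisely $L_+=-\Delta+1-3U^2$ restricted to radial functions. Its radial kernel is trivial by the standard nondegeneracy of the ground state (the full $H^1$ kernel is spanned by $\partial_1U,\partial_2U$, which are not radial) --- exactly the input the paper invokes. Without this correction your argument does not close: if the potential really were $3(U')^2$, no such kernel statement would be available and the central invertibility claim would be unsupported.

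A second, more minor issue concerns surjectivity. Your right-hand side $\tilde b=a-b'-b/\rho$ is only in $H^{-1}$ (note $b'+b/\rho=\mathrm{div}(b\,\hat r)$ with $b\in L^2$), so the scalar problem must be posed weakly; and after solving it, $\phi=b-\gamma'$ is a priori only in $L^2$, since $b$ is merely $L^2$. Recovering $\phi\in H^1$ needs an extra argument (for instance, $\phi\in L^2$ together with $\phi'+\phi/\rho\in L^2$ and the fact that $\phi\,\hat r$ is a gradient field yields $\phi\in H^1$; alternatively one treats the right-hand sides $(a,0)$ and $(0,b)$ separately). The paper sidesteps both points by proving injectivity via nondegeneracy and then getting surjectivity from the Fredholm alternative, writing the linearization as $J+K$ with $J$ an isomorphism (two Lax--Milgram problems) and $K$ a compact multiplication operator. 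Once you repair the identification of the potential, either route can be made to work, but yours requires the additional bookkeeping above.
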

The proof of this lemma requires the results that are contained in the following lemmas.

\begin{lemma}
\label{lem-inj}
The operator $D_{(X, Y)}\cF(0,u_0,v_0):X_r\to L^2(\R^2,\R^2)$ is injective.
\end{lemma}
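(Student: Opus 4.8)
The plan is to compute the linearization of $\cF$ at $(0,u_0,v_0)$ explicitly and show that a function in its kernel must vanish. Differentiating \eqref{eq:map} with respect to $(u,v)$ at $\eps=0$ and at $(u_0,v_0)=(U,V)$ with $V=-U'$, and recalling from Remark \ref{rem:onehalf} that $m=\beta_2=1/2$, the $\eps$-dependent cubic terms drop out at $\eps=0$, so the linearized operator $L:=D_{(X,Y)}\cF(0,u_0,v_0)$ acting on a pair $(X,Y)\in X_r$ is
\[
L(X,Y)=\begin{pmatrix}\displaystyle X'+\frac{X}{\rho}+ (1-3V^2)\,Y\\[.25cm] Y'+ X\end{pmatrix}\,,
\]
since only the term $2\beta_2 g^2 g=g^3$ survives in the first component and $(2m-\eps)f=f$ survives in the second. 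Thus $(X,Y)\in\ker L$ means $Y'=-X$ and $X'+X/\rho=(3V^2-1)Y$. Eliminating $X$ as in \eqref{eq:NLSaux}, this is equivalent to $-Y''-\frac1\rho Y'+Y-3U'^2 Y=0$ together with $X=-Y'$.

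First I would recognize this as (the radial part of) the linearized NLS equation around the ground state, but with a subtlety: the potential appearing is $3U'^2$ rather than $3U^2$. Recall that $U$ solves $-\Delta U + U - U^3 = 0$; differentiating in $r$ gives that $U'$ satisfies $-\Delta(U') + \frac{U'}{\rho^2} + U' - 3U^2 U' = 0$ (the extra $\rho^{-2}$ term because $U'$ sits in the angular-momentum-one sector under the ansatz \eqref{eq:ansatz}). So the relevant linearized operator here is really $\cL_1 := -\Delta + \rho^{-2} + 1 - 3U^2$ acting on functions of the form $Y(\rho)e^{\I\theta}$, i.e.\ the restriction to the first angular sector of $L_- = -\Delta + 1 - 3U^2$. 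One then needs to know $\ker \cL_1 = \{0\}$ on $H^1$. This is a standard nondegeneracy fact: $L_-$ on $\R^2$ has a one-dimensional kernel in the angular-momentum-zero sector (no zero modes) — actually $L_-U = -2U^3 \ne 0$ so $U \notin \ker L_-$, and $\ker L_- = \mathrm{span}\{\partial_{x_1}U, \partial_{x_2}U\}$, which lives precisely in the $m=\pm1$ angular sectors; projecting onto $Y(\rho)e^{\I\theta}$ recovers $U'$. Wait — this would say $U' \in \ker\cL_1$, contradicting injectivity. So the point must be that $U' \notin H^1$-orthogonal complement issue, or rather that $X = -Y'$ forces additional constraints: if $Y = U'$ then $X = -U''$, and one checks $(X,Y)=(-U'',U')$ does lie in $X_r$. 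Hence the naive kernel is nontrivial, and the actual argument must use the precise structure of $\cF$'s two components more carefully.

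Let me reconsider: the correct first component linearization of $f' + f/\rho$ at $(U,V)$ — note $u_0 = U$ is the \emph{first} radial component $v(r)$ in the ansatz no wait, checking \eqref{eq:NLSsol}: $(u_0,v_0)=(U,V)$ with the $f$-equation giving $f = -g'$, and from \eqref{eq:ansatz}–\eqref{eq:uv} the roles are: $u$ multiplies $e^{\I\theta}$ (angular sector one) and $v$ is radial (sector zero). In the $\eps=0$ limit \eqref{eq:eps0}, $g$ plays the role scaled from $v$ and $f$ from $u$; so $v_0 \leftrightarrow g = U$? Comparing \eqref{eq:eps0} with \eqref{eq:NLSaux}: $g$ solves the NLS, $f = -g'$, and \eqref{eq:NLSsol} sets $(u_0,v_0)=(U,V)$, $V=-U'$ — so actually $f \leftrightarrow U$ wait that's inconsistent with $f=-g'$. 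I'd sort this indexing out first, because it determines which sector each kernel component lives in, and that is exactly what makes $\cL_1$ (with the $\rho^{-2}$) rather than $L_-$ (without) the relevant operator — and it is the presence of that $\rho^{-2}$ that kills the kernel. The nondegeneracy input I would then cite is: $-\Delta + 1 - 3U^2$ restricted to $H^1$-functions of the form $Y(\rho)e^{\I\theta}$ has trivial kernel, because its only $L^2$ zero modes $\partial_{x_j}U$ are $O(|x|^{-3/2})$-decaying but their radial profiles $U'(\rho)$, once one imposes $Y \in H^1$ i.e. $Y e^{\I\theta} \in H^1(\R^2)$, do belong — so the real obstruction removal must come from the first-order \emph{system} structure: $X = -Y'$ combined with $X' + X/\rho = (\ldots)Y$ forces a \emph{second} relation that the putative zero mode fails.

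The main obstacle, then, is pinning down exactly why $\ker L = \{0\}$ despite the linearized scalar NLS operator being degenerate — i.e.\ showing the first-order Dirac-type system is "more injective" than its squared scalar reduction, presumably because solving $Y' = -X$ and back-substituting and then demanding $(X,Y) \in X_r \subset H^1 \times H^1$ rules out the translation zero modes by a decay/integrability or boundary-behavior argument at $\rho = 0$ (the $X/\rho$ term) and $\rho = \infty$. I would structure the proof as: (i) write $L$ explicitly; (ii) reduce $\ker L$ to a single scalar ODE for $Y$ of the form $\cL_1 Y = 0$ with $\cL_1 = -\Delta + \rho^{-2} + 1 - 3U^2$ (or whatever the correct potential power turns out to be after the indexing is fixed); (iii) invoke the nondegeneracy of the ground state — specifically that $L_-$ has no zero modes in the relevant angular sector within $H^1$ — citing \cite[Thm. 8.1.5]{CAZ} or the standard nondegeneracy reference; (iv) conclude $Y \equiv 0$, hence $X \equiv 0$. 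The delicate point is step (iii): I expect we need the fact that the kernel of $L_- = -\Delta + 1 - 3U^2$ on $L^2(\R^2)$ is spanned by $\partial_{x_1}U, \partial_{x_2}U$, and that projecting onto the $e^{\pm\I\theta}$ angular modes these give radial profiles which, crucially, \emph{do} lie in $H^1$ — so the kernel is in fact not obviously trivial and one must appeal to an additional structural feature, most likely that the pair built from $U'$ does not satisfy the full first-order system (only its second-order consequence), giving the needed contradiction.
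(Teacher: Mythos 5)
There is a genuine gap: the proposal never actually establishes injectivity, and the scalar reduction it sets up is the wrong one. Unravelling the indexing (which the paper itself muddles in \eqref{eq:NLSsol}): in the ansatz \eqref{eq:ansatz} the component $v\leftrightarrow g$ is the purely radial one, and it is $g$ that carries the ground state $U$ of $-\Delta U+U-U^3=0$, while $u\leftrightarrow f=-g'$ sits in the $e^{\I\theta}$ sector; this is exactly what the statement of Theorem \ref{thm:main} records. Consequently, linearizing \eqref{eq:map} at $\eps=0$ in $(h,k)$ (perturbing $f$ and $g$ respectively) gives $h'+h/\rho+(1-3U^2)k=0$ and $k'+h=0$; eliminating $h=-k'$ yields $-k''-\tfrac1\rho k'+k-3U^2k=0$, i.e.\ the linearized NLS operator $-\Delta+1-3U^2$ acting on the \emph{radial} function $k$, in the angular-momentum-zero sector. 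Its radial kernel is trivial by the standard nondegeneracy of the ground state (the paper's citation \cite{NLSspectra}), so $k\equiv0$ and then $h=-k'\equiv0$. The translation modes $\partial_{x_1}U,\partial_{x_2}U$ that occupy most of your discussion live in the $e^{\pm\I\theta}$ sectors and simply never enter, because the second-order equation falls on $k$, not on the sector-one component.

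Concretely, your proposal (a) takes the linearized potential to be $3(U')^2$ rather than $3U^2$, because you substituted $V=-U'$ for the ground state --- an error invited by the paper's inconsistent labelling of $(u_0,v_0)$, but an error that changes the operator entirely --- and (b) reduces to a scalar equation attributed to the wrong angular sector, which leads you to the (here spurious) obstruction of the translation zero modes of $-\Delta+1-3U^2$. You correctly sense that something is inconsistent, but the argument ends in speculation (``the actual argument must use\dots'', ``I expect we need\dots'') rather than a resolution; as written it does not prove $\ker D_{(u,v)}\cF(0,u_0,v_0)=\{0\}$. The repair is pure bookkeeping: once the radial component is identified as the one carrying the ground state, the paper's short argument --- reduce to the radial kernel of the linearization of the ground-state equation and invoke its nondegeneracy --- closes the proof, with no need for any ``additional structural feature'' of the first-order system.
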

\begin{proof}
We need to prove that $\textrm{ker}\{D_{(u,v)}\cF(0,u_0,v_0)\}$ is trivial. For this reason let us consider the linearization of \eqref{eq:eps0} at $(0,u_0,v_0)$ and $(h,k)\in\ker D_{(u,v)}\cF(0,u_0,v_0)$, so that
\be\label{eq:linearization}
D_{(u,v)}\cF(0,u_0,v_0)[h,k]:=\begin{pmatrix}\displaystyle k'+\frac{k}{\rho}+h-u_0^2h\\[.2cm]
 \displaystyle h'+k\end{pmatrix} =0
\ee
%Consider $$. 
Then $h$ solves 
\[
 -h''-\frac{1}{\rho}h'-u^2_0h+h=0
\]
that is, $h$ lies in the kernel of the linearization of \eqref{eq:nls} at the ground state solution $U$. By know results \cite{NLSspectra}, such kernel is empty and thus $h\equiv0$ and $k\equiv0$, proving the Lemma.
\end{proof}

Now we want to prove that $D_{(u,v)}\cF(0,u_0,v_0)$ is surjective, using the Fredholm alternative \cite[Thm. 6.6]{functional}. Namely, using classical arguments from perturbation theory of linear operators, the claim follows showing that $D_{(u,v)}\cF(0,u_0,v_0)$ is given by the sum of an isomorphism and a compact operator.

By \eqref{eq:linearization}, let
\begin{equation}
\label{eq-dec}
 D_{(u,v)}\cF(0,u_0,v_0)=J+K(u_0),
\end{equation}
where $J,K(u_0):X_r\to L^2(\R^2,\R^2)$ are defined as
\be
\label{eq:J}
J[h,k]:=\left(k'+\frac{k}{\rho}+h,h'+k\right)^T,
\ee
and
\be
\label{eq:K}
K(U)[h,k]:=(u_0h,0)^T\,.
\ee

\begin{lemma}
\label{lem-J}
 The operator $J:X_r\to L^2(\R^2,\R^2)$ is an isomorphism.
\end{lemma}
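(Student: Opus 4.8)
The plan is to show that $J:X_r\to L^2(\R^2,\R^2)$ is a bounded bijection with bounded inverse, which by the open mapping theorem reduces to checking bijectivity together with continuity. Continuity of $J$ is immediate from its definition \eqref{eq:J}: the first-order terms $k'$, $h'$ and the Hardy-type term $k/\rho$ are all controlled by the $H^1$-norm on radial functions (recall that for radial $k\in H^1(\R^2)$ with the $e^{\I\theta}$-angular factor built into \eqref{eq:ansatz}, the combination $k' + k/\rho$ is exactly the relevant component of $\sigma\cdot\nabla$ applied to $\psi$, hence $L^2$-bounded). So the substance is bijectivity, and the clean way to get it is to identify $J$, up to the unitary change of variables encoding the ansatz \eqref{eq:ansatz}, with the free massless Dirac operator $\cD = -\I\sigma\cdot\nabla$ shifted by the constant term coming from the ``$+h$'' and ``$+k$'' entries — in other words with $\cD_{1/2} = \cD + \tfrac12\sigma_3$ restricted to the symmetry sector $X_r$ (using $m=1/2$ from Remark \ref{rem:onehalf}).

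Concretely, I would first write down explicitly the operator on $L^2(\R^2,\C^2)$ that $J$ represents once one undoes the substitution $(u,v)\mapsto \psi$ of \eqref{eq:ansatz}: the angular factor $e^{\I\theta}$ turns the radial operators $\partial_r \pm r^{-1}$ into the two off-diagonal components of $-\I\sigma\cdot\nabla$ acting on functions of the symmetric form, and the diagonal zeroth-order part is multiplication by $\pm\tfrac12$, i.e. $+\tfrac12\sigma_3$. Hence $J$ is unitarily equivalent to $(\cD + \tfrac12\sigma_3)\big|_{X_r}$. By the spectral description \eqref{eq:spectrum} with $m=1/2$, namely $\spec(\cD_{1/2}) = (-\infty,-\tfrac12]\cup[\tfrac12,+\infty)$, the value $0$ lies in the resolvent set, so $\cD_{1/2}:H^1(\R^2,\C^2)\to L^2(\R^2,\C^2)$ is an isomorphism. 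Since the ansatz subspace $X_r$ is an invariant (reducing) subspace for $\cD_{1/2}$ — it is cut out by the $SO(2)$-equivariance encoded in \eqref{eq:ansatz}, and both $\sigma\cdot\nabla$ and $\sigma_3$ respect that symmetry — the restriction $\cD_{1/2}\big|_{X_r}$ is still an isomorphism onto the corresponding subspace $L^2(\R)\times L^2(\R)$ of radial profiles. Transporting back through the unitary equivalence gives that $J$ is an isomorphism.

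The main point requiring care — and the step I expect to be the real obstacle — is verifying that the symmetry sector $X_r$ is genuinely reducing for $\cD_{1/2}$, i.e. that $X_r$ and its orthogonal complement are both invariant, so that the restriction inherits invertibility (mere invariance of $X_r$ under a self-adjoint operator does not by itself give invertibility of the restriction). This is where one uses that $\cD_{1/2}$ commutes with the relevant symmetry operator (the combination of rotation and the appropriate phase acting on the spinor components that singles out \eqref{eq:ansatz}); then $X_r$ is an eigenspace of a unitary commuting with $\cD_{1/2}$, hence reducing, and the spectral bound passes to the restriction. Alternatively, and perhaps more elementarily for the write-up, one can avoid invoking the functional-analytic machinery: solve $J[h,k]=(p,q)$ directly by substituting $k = q - h'$ from the second equation into the first, obtaining the scalar ODE $-h'' - \rho^{-1}h' + h = p - q' + q/\rho$, i.e. $(-\Delta + 1)h = \tilde p$ on $\R^2$ in the radial class, which is uniquely solvable in $H^1(\R^2)$ for every right-hand side in $L^2$ since $-\Delta+1$ is an isomorphism $H^2\to L^2$ (and maps $H^1\to H^{-1}$ isomorphically); one then recovers $k$ and checks $k\in H^1$ and that the pair lies in $X_r$, using elliptic regularity and the exponential/decay structure to control the $\rho^{-1}$ terms near $\rho=0$ and at infinity. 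Either route closes the proof; I would present the direct ODE argument as the main line and mention the spectral interpretation as a remark.
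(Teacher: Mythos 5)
Your fallback argument (eliminate one unknown and solve a scalar equation of the form $(-\Delta+1)h=\tilde p$) is essentially the paper's own proof: injectivity is obtained by reducing $J[h,k]=0$ to $-\Delta h+h=0$, and surjectivity by splitting the right-hand side into $(a,0)$ and $(0,b)$, solving $-\Delta h+h=a$ by Lax--Milgram and setting $k=-h'$ (this splitting keeps the reduced right-hand side in $L^2$ rather than $H^{-1}$, which spares most of the regularity bookkeeping you flag at the end). If you run that line, you are fine.

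However, the route you present first and call ``the clean way'' rests on a misidentification, and as stated it would prove the opposite of what you want. The rescaling \eqref{eq:scaleuv} is anisotropic in the two components ($u\sim\eps f$, $v\sim\sqrt{\eps}\,g$), so the linearization $J$ at $\eps=0$ is \emph{not} the radial reduction of $\cD+\tfrac12\sigma_3$. Reading off \eqref{eq:J}, the zeroth-order part of $J$ is $+1$ in \emph{both} rows, i.e.\ the identity, not $\pm\tfrac12$: one $+1$ is $(m-\omega)/\eps=1$ after rescaling, the other is $m+\omega\to2m=1$, and they have entirely different origins. Were $J$ the symmetry-sector restriction of a self-adjoint Dirac operator with that diagonal, it would be $\cD+1$, whose spectrum is all of $\R$ (also within each angular-momentum channel), so $0$ would \emph{not} lie in the resolvent set and the spectral argument via \eqref{eq:spectrum} would fail. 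What actually makes $J$ invertible is that, with respect to the measure $\rho\,d\rho$, one has $(\partial_\rho+\rho^{-1})^*=-\partial_\rho$, so the first-order part of $J$ is \emph{skew}-adjoint and $J=I+S$ with $S$ skew; equivalently, eliminating one component produces the Schur complement $-\Delta+1>0$. In other words, the invertibility of $J$ is a nonrelativistic-limit/Schr\"odinger fact, not a consequence of the Dirac spectral gap. Drop the unitary-equivalence remark (or replace it by the observation that $\re\langle\psi,S\psi\rangle=0$ forces $\|J\psi\|\geq\|\psi\|$) and promote the elimination argument to the actual proof.
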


\begin{proof}
The operator $J$ is clearly continuous, so that we only need to prove injectivity and surjectivity.
 
 \smallskip
 \emph{Step (i): J is injective.} Assume $(h,k)^T\in X_r$ solves $J[h,k]=0$. The argument in the proof of Lemma \ref{lem-inj} gives
\[
-h''+\frac{1}{\rho}h'+h=0\,,
\]
i.e.
\[
-\Delta h+h=0\,,
\]
so that multiplying by $h$ and integrating by parts one immediately sees $h\equiv0$. By \eqref{eq:J} we also get $k\equiv0$.

 \smallskip
 \emph{Step (ii): J is surjective.} Let $a,b\in L^2(\R^2)$. We want to prove that there exists $(h,k)^T\in X_r$ such that
\be
\label{eq:surj}
J[h,k]=(a,b)^T,
\ee
that is, such that
\[
 \left\{\begin{array}{l}
  \displaystyle k'+\frac{k}{\rho}+h= a\\[.2cm]
  \displaystyle h'+k= b
 \end{array}\right.,
 \]
Assuming that $b\equiv0$, arguing as in \emph{Step (i)}, we have to find a weak solution $h_1\in H^1(\R^2)$ of
\be
\label{eq:bzero2}
 -\Delta h_1+h_1=a\,.
\ee
 The existence of such function is an immediate application of the Lax-Milgram Lemma \cite[Cor. 5.8]{functional}. Then taking $k_{1}=-h_1'$, the pair $(h_1,k_1)^T\in X_r$ solves \eqref{eq:surj} with $b\equiv0$.
The same argument, exchaging $k$ and $h$, allows to find a solution $(h_2,k_2)^T\in X_r$ of \eqref{eq:surj} with $a\equiv0$. By linearity of $J$ we thus get the claim.
\end{proof}

\begin{lemma}
\label{lem-K}
 The operator $K(u_0):X_r\to L^2(\R^2,\R^2)$ is compact.
\end{lemma}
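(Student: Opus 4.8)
The plan is to exploit the exponential decay of the ground state $U$ together with the Rellich--Kondrachov compactness theorem. Recall that $K(u_0)[h,k] = (u_0 h, 0)^T$, so it suffices to show that the map $h \mapsto u_0 h = U h$ is compact from $H^1(\R^2)$ into $L^2(\R^2)$ (restricted to the radial subspace, which only helps). Let $(h_n)$ be a bounded sequence in $H^1(\R^2)$, say $\|h_n\|_{H^1} \le M$. Passing to a subsequence, $h_n \rightharpoonup h$ weakly in $H^1(\R^2)$, and by Rellich--Kondrachov $h_n \to h$ strongly in $L^2_{\loc}(\R^2)$, i.e. on every ball $B_R$.

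The key step is to upgrade this to strong convergence of $U h_n$ in $L^2(\R^2)$ by a tail estimate. Fix $\delta > 0$. Since $U$ is continuous and exponentially decaying at infinity (by \cite[Thm. 8.1.5]{CAZ}), there exists $R > 0$ such that $\sup_{|x| \ge R} |U(x)|^2 \le \delta$. Splitting,
\[
\|U h_n - U h\|_{L^2(\R^2)}^2 \le \|U(h_n - h)\|_{L^2(B_R)}^2 + \int_{|x|\ge R} |U|^2 |h_n - h|^2 \cDx \,.
\]
The first term is bounded by $\|U\|_{L^\infty}^2 \|h_n - h\|_{L^2(B_R)}^2 \to 0$ as $n \to \infty$ by the local strong convergence. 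The second term is bounded by $\delta \, \|h_n - h\|_{L^2(\R^2)}^2 \le \delta (2M)^2$, using that $H^1 \hookrightarrow L^2$. Hence $\limsup_n \|U h_n - U h\|_{L^2}^2 \le 4 M^2 \delta$, and since $\delta$ is arbitrary, $U h_n \to U h$ strongly in $L^2(\R^2)$. Therefore $K(u_0)$ maps bounded sequences to sequences with a convergent subsequence, i.e. it is compact.

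The only mild subtlety is to make sure the functional setting is consistent: $K(u_0)$ is plainly continuous (since $\|U h\|_{L^2} \le \|U\|_{L^\infty}\|h\|_{L^2} \le \|U\|_{L^\infty}\|h\|_{H^1}$), and compactness as just shown; no radial symmetry is actually needed, though restricting to $X_r$ causes no harm. Combining Lemma \ref{lem-J} and Lemma \ref{lem-K} with the decomposition \eqref{eq-dec}, the operator $D_{(u,v)}\cF(0,u_0,v_0)$ is a compact perturbation of an isomorphism, hence Fredholm of index zero; together with the injectivity established in Lemma \ref{lem-inj}, the Fredholm alternative \cite[Thm. 6.6]{functional} yields that it is an isomorphism, which is precisely Lemma \ref{prop:linearized}. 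I do not anticipate any real obstacle here — the exponential decay of $U$ does all the work, and the argument is the standard "compact multiplier by a decaying function" lemma.
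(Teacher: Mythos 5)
Your proof is correct and follows essentially the same route as the paper: split $\R^2$ into a large ball, where Rellich--Kondrachov gives strong $L^2_{\loc}$ convergence, and its complement, where the decay of $u_0$ at infinity makes the tail uniformly small. The only difference is cosmetic (you invoke the exponential decay of $U$, while the paper only uses that $u_0\to 0$ at infinity), so nothing further is needed.
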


\begin{proof}
Let $\big((h_n,k_n)^T\big)_{n}\subset X_r$ be a bounded sequence.

Note that, up to subsequences,
\be
\label{eq:tail}
(h_n,k_n)\longrightarrow (h,k)\qquad \text{in}\quad L_{loc}^2(\R^2,\R^2).
\ee
On the other hand, since the soliton $u_0$ tends to zero at infinity, for all $\eta>0$ there exists $M_\eta>0$ such that $u_0(x)<\eta$ if $|x|>M_\eta$ . Thus,
\begin{multline*}
\|K(u_0)[h_n,k_n]-K(u_0)[h,k]\|_{L^2(\R^2)}=\|u_0^{2}(h_n-h)\|_{L^2(\R^2\setminus B_{M_\eta})}\\[.2cm]
+\|u_0^{2}(h_n-h)\|_{L^2(B_{M_\eta})}\leq C\eta^{2}+o(1),\qquad\text{as}\quad n\to\infty.
\end{multline*}
where $B_{M_\eta}:=\{\vert x\vert\leq B_{M_\eta}\}\subset\R^2$. Then
\[
\lim_{n\to\infty}\|K(u_0)[h_n,k_n]-K(u_0)[h,k]\|_{L^2(\R^2)}\leq C\eta^{2},\qquad\forall \eta>0,
\]
so that the statement follows.
\end{proof}

Now, we can combine all the previous results to prove Proposition \ref{prop:linearized}.

\begin{proof}[Proof of Proposition \ref{prop:linearized}]
 Notice that $D_{(X, Y)}\cF(0,u_0,v_0)$ is clearly continuous, and it is injective by Lemma \ref{lem-inj}. On the other hand, see \eqref{eq-dec}, Lemmata \ref{lem-J} and \ref{lem-K} show that 
 \[
 D_{(X, Y)}\cF(0,u_0,v_0)=J+K
 \] is the sum of an isomorphism and of a compact operator. Then  
 \[
 D_{(X, Y)}\cF(0,u_0,v_0)=J+K=J(I+J^{-1}K)\,.
 \]
 Since $J$ is an isomorphism, the map $(I+J^{-1}K)$, which is of the form identity plus compact, is also injective. Then the claim follows by Fredholm's alternative \cite[Thm. 6.6]{functional}.\end{proof}
 The proof of Theorem \ref{thm:main} immediately follows, as we can now prove Proposition \ref{prop:smallepssol}.
 \begin{proof}[Proof of Proposition \ref{prop:smallepssol}]
There holds $\cF(0,u_0,v_0)=0$ and by Proposition \ref{prop:linearized}, the differential $D_{(u,v)}\cF(0,u_0,v_0)$ is an isomorphism. Then the claim follows by the implicit function theorem.
 \end{proof}

\bibliographystyle{siam}
\bibliography{Mult}

\end{document}